
\documentclass{tac}
\usepackage{amssymb, amsmath, paralist, microtype, stmaryrd, url}
\usepackage[nobreak]{cite}
\usepackage[latin1]{inputenc}
\usepackage[dvips, arrow, matrix, tips, curve]{xy}
\usepackage[british]{babel}
\usepackage{color}
\definecolor{darkgreen}{rgb}{0,0.45,0}
\usepackage[pagebackref,colorlinks,citecolor=darkgreen,linkcolor=darkgreen]{hyperref}
\SelectTips{cm}{10}


\newcommand{\cat}[1]{\mathbf{#1}}

\newcommand{\op}{\mathrm{op}}

\newcommand{\thg}{{\mathord{\text{--}}}}

\newcommand{\cd}[2][]{\vcenter{\hbox{\xymatrix#1{#2}}}}


\newcommand{\C}{{\mathcal C}}

\newcommand{\E}{{\mathcal E}}



\newcommand{\xtor}[1]{\cdl[@1]{{} \ar[r]|-{\object@{|}}^{#1} & {}}}
\newcommand{\tor}{\ensuremath{\relbar\joinrel\mapstochar\joinrel\rightarrow}}

\makeatletter

\def\hookleftarrowfill@{\arrowfill@\leftarrow\relbar{\relbar\joinrel\rhook}}
\def\twoheadleftarrowfill@{\arrowfill@\twoheadleftarrow\relbar\relbar}
\def\leftbararrowfill@{\arrowdoublefill@{\leftarrow\mkern-5mu}\relbar\mapstochar\relbar\relbar}
\def\Leftbararrowfill@{\arrowdoublefill@{\Leftarrow\mkern-2mu}\Relbar\Mapstochar\Relbar\Relbar}
\def\leftringarrowfill@{\arrowdoublefill@{\leftarrow\mkern-3mu}\relbar{\mkern-3mu\circ\mkern-2mu}\relbar\relbar}
\def\lefttriarrowfill@{\arrowfill@{\mathrel\triangleleft\mkern0.5mu\joinrel\relbar}\relbar\relbar}
\def\Lefttriarrowfill@{\arrowfill@{\mathrel\triangleleft\mkern1mu\joinrel\Relbar}\Relbar\Relbar}

\def\hookrightarrowfill@{\arrowfill@{\lhook\joinrel\relbar}\relbar\rightarrow}
\def\hookrightarrowfill@{\arrowfill@{\lhook\joinrel\relbar}\relbar\rightarrow}
\def\twoheadrightarrowfill@{\arrowfill@\relbar\relbar\twoheadrightarrow}
\def\rightbararrowfill@{\arrowdoublefill@{\relbar\mkern-0.5mu}\relbar\mapstochar\relbar\rightarrow}
\def\Rightbararrowfill@{\arrowdoublefill@{\Relbar\mkern-2mu}\Relbar\Mapstochar\Relbar\Rightarrow}
\def\rightringarrowfill@{\arrowdoublefill@\relbar\relbar{\mkern-2mu\circ\mkern-3mu}\relbar{\mkern-3mu\rightarrow}}
\def\righttriarrowfill@{\arrowfill@\relbar\relbar{\relbar\joinrel\mkern0.5mu\mathrel\triangleright}}
\def\Righttriarrowfill@{\arrowfill@\Relbar\Relbar{\Relbar\joinrel\mkern1mu\mathrel\triangleright}}

\def\leftrightarrowfill@{\arrowfill@\leftarrow\relbar\rightarrow}
\def\mapstofill@{\arrowfill@{\mapstochar\relbar}\relbar\rightarrow}

\renewcommand*\xleftarrow[2][]{\ext@arrow 20{20}0\leftarrowfill@{#1}{#2}}
\providecommand*\xLeftarrow[2][]{\ext@arrow 60{22}0{\Leftarrowfill@}{#1}{#2}}
\providecommand*\xhookleftarrow[2][]{\ext@arrow 10{20}0\hookleftarrowfill@{#1}{#2}}
\providecommand*\xtwoheadleftarrow[2][]{\ext@arrow 60{20}0\twoheadleftarrowfill@{#1}{#2}}
\providecommand*\xleftbararrow[2][]{\ext@arrow 10{22}0\leftbararrowfill@{#1}{#2}}
\providecommand*\xLeftbararrow[2][]{\ext@arrow 50{24}0\Leftbararrowfill@{#1}{#2}}
\providecommand*\xleftringarrow[2][]{\ext@arrow 10{26}0\leftringarrowfill@{#1}{#2}}
\providecommand*\xlefttriarrow[2][]{\ext@arrow 80{24}0\lefttriarrowfill@{#1}{#2}}
\providecommand*\xLefttriarrow[2][]{\ext@arrow 80{24}0\Lefttriarrowfill@{#1}{#2}}

\renewcommand*\xrightarrow[2][]{\ext@arrow 01{20}0\rightarrowfill@{#1}{#2}}
\providecommand*\xRightarrow[2][]{\ext@arrow 04{22}0{\Rightarrowfill@}{#1}{#2}}
\providecommand*\xhookrightarrow[2][]{\ext@arrow 00{20}0\hookrightarrowfill@{#1}{#2}}
\providecommand*\xtwoheadrightarrow[2][]{\ext@arrow 03{20}0\twoheadrightarrowfill@{#1}{#2}}
\providecommand*\xrightbararrow[2][]{\ext@arrow 01{22}0\rightbararrowfill@{#1}{#2}}
\providecommand*\xRightbararrow[2][]{\ext@arrow 04{24}0\Rightbararrowfill@{#1}{#2}}
\providecommand*\xrightringarrow[2][]{\ext@arrow 01{26}0\rightringarrowfill@{#1}{#2}}
\providecommand*\xrighttriarrow[2][]{\ext@arrow 07{24}0\righttriarrowfill@{#1}{#2}}
\providecommand*\xRighttriarrow[2][]{\ext@arrow 07{24}0\Righttriarrowfill@{#1}{#2}}

\providecommand*\xmapsto[2][]{\ext@arrow 01{20}0\mapstofill@{#1}{#2}}
\providecommand*\xleftrightarrow[2][]{\ext@arrow 10{22}0\leftrightarrowfill@{#1}{#2}}
\providecommand*\xLeftrightarrow[2][]{\ext@arrow 10{27}0{\Leftrightarrowfill@}{#1}{#2}}

\makeatother


\newcommand{\twocong}[2][0.5]{\ar@{}[#2] \save ?(#1)*{\cong}\restore}
\newcommand{\twoeq}[2][0.5]{\ar@{}[#2] \save ?(#1)*{=}\restore}
\newcommand{\rtwocell}[3][0.5]{\ar@{}[#2] \ar@{=>}?(#1)+/l 0.2cm/;?(#1)+/r 0.2cm/^{#3}}
\newcommand{\ltwocell}[3][0.5]{\ar@{}[#2] \ar@{=>}?(#1)+/r 0.2cm/;?(#1)+/l 0.2cm/^{#3}}
\newcommand{\ltwocello}[3][0.5]{\ar@{}[#2] \ar@{=>}?(#1)+/r 0.2cm/;?(#1)+/l 0.2cm/_{#3}}
\newcommand{\dtwocell}[3][0.5]{\ar@{}[#2] \ar@{=>}?(#1)+/u  0.2cm/;?(#1)+/d 0.2cm/^{#3}}
\newcommand{\dltwocell}[3][0.5]{\ar@{}[#2] \ar@{=>}?(#1)+/ur  0.2cm/;?(#1)+/dl 0.2cm/^{#3}}
\newcommand{\drtwocell}[3][0.5]{\ar@{}[#2] \ar@{=>}?(#1)+/ul  0.2cm/;?(#1)+/dr 0.2cm/^{#3}}
\newcommand{\dthreecell}[3][0.5]{\ar@{}[#2] \ar@3{->}?(#1)+/u  0.2cm/;?(#1)+/d 0.2cm/^{#3}}
\newcommand{\utwocell}[3][0.5]{\ar@{}[#2] \ar@{=>}?(#1)+/d 0.2cm/;?(#1)+/u 0.2cm/_{#3}}
\newcommand{\dtwocelltarg}[3][0.5]{\ar@{}#2 \ar@{=>}?(#1)+/u  0.2cm/;?(#1)+/d 0.2cm/^{#3}}
\newcommand{\utwocelltarg}[3][0.5]{\ar@{}#2 \ar@{=>}?(#1)+/d  0.2cm/;?(#1)+/u 0.2cm/_{#3}}

\newdir{(}{{}*!<0em,-.14em>-\cir<.14em>{l^r}}
\newdir{ (}{{}*!/-5pt/\dir{(}}
\newdir{ >}{{}*!/-5pt/\dir{>}}


\newtheorem{Thm}{Theorem}
\newtheorem{Prop}{Proposition}

\begin{document}
 \leftmargini=2em
\title{Remarks on exactness notions\\pertaining to pushouts}
\author{Richard Garner}
\address{Department of Computing, Macquarie University, NSW 2109, Australia}
\eaddress{richard.garner@mq.edu.au} 
\keywords{Exactness, pushouts, Mal'cev relation, difunctional relation}
\amsclass{18A30, 18B25}
\copyrightyear{2011}
\date{\today}
 \maketitle
\begin{abstract}
We call a finitely complete category \emph{diexact} if every Mal'cev relation admits a pushout which is stable under pullback and itself a pullback. We prove three results relating to diexact categories: firstly, that a category is a pretopos if and only if it is diexact with a strict initial object; secondly, that a category is diexact if and only if it is Barr-exact, and every pair of monomorphisms admits a pushout which is stable and a pullback; and thirdly, that a small category with finite limits and pushouts of Mal'cev spans is diexact if and only if it admits a full structure-preserving embedding into a Grothendieck topos.
\end{abstract}

\newcommand{\pair}[2]{(#1,#2)}
\section{Introduction}
Amongst the first facts that a category theorist will learn about limits and colimits is that certain limit or colimit types suffice for the construction of other ones. Thus, for example, all small colimits may be constructed from small coproducts and coequalisers, or instead, from finite colimits and filtered ones; whilst finite colimits  may in turn be constructed from finite coproducts and coequalisers, or alternatively, from the initial object and pushouts.


Somewhat later, a category theorist becomes cognisant of notions such as regularity, Barr-exactness or extensivity, which involve the existence of finite limits, of certain colimits, and of ``exactness conditions'' expressing the good behaviour of the colimits with respect to the finite limits. In~\cite{Garner2011Lex-colimits}, Lack and the author described how such structures may be recognised as instances of a general theory of cocompleteness, fully the equal of the classical theory, but now existing ``in the lex world''; more precisely, in the $2$-category $\cat{LEX}$ of finitely complete categories and finite-limit preserving functors. 

This paves the way for our studying relative constructibility of exactness notions just as is done for ordinary colimits. The maximal exactness notion is that of being an \emph{infinitary pretopos}, and again this may be constructed from lesser notions in various ways. A category is an infinitary pretopos just when it is infinitary extensive and Barr-exact---thus, having well-behaved coproducts, and well-behaved coequalisers of equivalence relations; alternatively, just when it is a pretopos and admits filtered colimits commuting with finite limits. Now a category is a pretopos just when it is (finitary) extensive and Barr-exact, which is analogous to the construction of finite colimits from finite coproducts and coequalisers; but it is notable that there is no corresponding analogue for the constructibility of finite colimits from pushouts and an initial object. 

The purpose of these remarks is to provide such an analogue. We call a finitely complete category \emph{diexact} if it admits pushouts of Mal'cev (also called difunctional) relations, and every such pushout is stable under pullback and itself a pullback square. We then prove three results relating to diexact categories. The first is the relative constructibility result alluded to above: it says that a category is a pretopos if and only if it is diexact with a strict initial object. The second considers diexactness in the absence of a strict initial object, and shows that this notion is in turn constructible from lesser ones: we prove that a category is diexact just when it is Barr-exact and every pair of monomorphisms admits a pushout which is stable and a pullback; equally,  just when it is Barr-exact and adhesive in the sense of~\cite{Lack2005Adhesive}. Our third result  states that a small category with finite limits and pushouts of Mal'cev spans is diexact just when it admits a structure-preserving full embedding into a Grothendieck topos. It follows that diexactness is an exactness notion in the precise sense delineated in~\cite{Garner2011Lex-colimits}.

Diexactness involves only \emph{connected} colimits and finite limits, and so is stable under passage to the coslice; so that, for example, the category of pointed sets is diexact, though it is not a pretopos as its initial object is a zero object. The property of having filtered colimits commuting with finite limits is also stable under coslicing, and so also possessed by the category of pointed sets; in fact, we will show in future work that the ``$\cat{Set}_\ast$-enriched Grothendieck toposes''---that is, the $\cat{Set}_\ast$-categories arising as localisations of presheaf $\cat{Set}_\ast$-categories---are precisely the locally presentable categories which are diexact, with a zero object, and with filtered colimits commuting with finite limits.

\textbf{Acknowledgements.} Thanks to Robin Cockett for setting in motion the train of thought pursued in this paper by asking the question---here answered in the affirmative---``are pretoposes adhesive?''. Thanks also to members of the Australian Category Seminar for useful comments and suggestions. This work was supported by an Australian Research Council Discovery Project, grant number DP110102360.

\section{The results}
We assume that the reader is familiar with the notions of \emph{regular}, \emph{Barr-exact}, \emph{extensive} and \emph{coherent} category, and of \emph{strict} initial object; they could, for example, consult~\cite[Section~A1]{Johnstone2002Sketches}. 
Recall that a \emph{pretopos} is a finitely complete category which is both extensive and Barr-exact (and thus also coherent). Although a pretopos admits finite coproducts and coequalisers of equivalence relations, it need not admit all coequalisers; consequently, the general pretopos cannot admit all pushouts, since these, together with the initial object, would imply the existence of all finite colimits. However, a pretopos certainly has \emph{some} pushouts; for example, pushouts over the initial object yielding coproducts. What we aim to describe is a class of well-behaved pushouts which exist in any pretopos, and which, in the presence of a strict initial object, completely characterise the pretoposes.

%

By a \emph{Mal'cev span} in a finitely complete category, we mean a jointly monic span $f \colon A \leftarrow C \rightarrow B \colon g$ for which there is a factorisation
\begin{equation*}
\cd[@-1em]{
 & C \times_B C \times_A C \ar@{-->}[d] \ar[ddl]_{f.\pi_1} \ar[ddr]^{g.\pi_3} \\
 & C \ar[dl]^f \ar[dr]_g \\
 A & & B\rlap{ .}}
\end{equation*}
In the category of sets, a relation $R \subset A \times B$ is Mal'cev just when it satisfies the condition $(a \mathrel{R} b) \wedge (a \mathrel R b') \wedge (a' \mathrel R b) \Rightarrow (a' \mathrel R b')$; a span $h \colon A \leftarrow C \rightarrow B$ in the general $\C$ is Mal'cev just when $\C(X,C)$ is a Mal'cev relation in $\cat{Set}$ for each $X \in \C$. 
Some important classes of Mal'cev spans are:\vskip0.5\baselineskip

\begin{compactenum}[(i)]
\item Any span $f \colon A \leftarrow C \rightarrow B \colon g$ in which either $f$ or $g$ is monic;
\item Any span $f \colon A \leftarrow C \rightarrow B \colon g$ which is the pullback of a cospan;
\item Any endospan $s \colon A \leftarrow E \rightarrow A \colon t$ constituting an equivalence relation on $A$. Indeed, an endospan is an equivalence relation if and only if it is Mal'cev and reflexive.
\end{compactenum}\vskip0.5\baselineskip


We call a finitely complete category $\C$ \emph{diexact} if it admits pushouts of Mal'cev spans, and moreover, every such pushout square is stable under pullback and is itself a pullback. Equivalently, a category is diexact if it admits stable pushouts of pullbacks of pairs of arrows, and every Mal'cev span is a pullback span.

\begin{Prop}\label{prop:pretopmalcevexact}
A pretopos is diexact.
\end{Prop}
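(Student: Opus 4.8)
The plan is to realise the pushout of a Mal'cev span as a quotient of a coproduct by an explicitly generated equivalence relation, exploiting the calculus of relations available in the regular category underlying a pretopos $\C$, together with extensivity and the effectiveness of equivalence relations.

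First I would reduce the pushout to a coequaliser. Writing $\iota_A \colon A \to A+B$ and $\iota_B \colon B \to A+B$ for the coprojections, the pushout of $f \colon A \leftarrow C \rightarrow B \colon g$ is the coequaliser of $u = \iota_A f$ and $v = \iota_B g \colon C \rightrightarrows A+B$. A general pretopos need not admit this coequaliser directly, but it does admit quotients of equivalence relations; so the aim is to show that the Mal'cev hypothesis forces the equivalence relation on $A+B$ generated by $u$ and $v$ to exist and to close up after a single step.

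Let $R_0 \hookrightarrow (A+B)^2$ be the image of $(u,v) \colon C \to (A+B)^2$. Since $C$ is jointly monic and coprojections are monic, $R_0 = C$ as a subobject, supported on the mixed blocks of the extensive decomposition $(A+B)^2 \cong A^2 + (A\times B) + (B \times A) + B^2$. I would then form
\[ E \;=\; \Delta_{A+B} \vee R_0 \vee R_0^{\op} \vee R_0 R_0^{\op} \vee R_0^{\op} R_0, \]
with the join taken among relations on $A+B$ and juxtaposition denoting relational composition; these operations all exist in a coherent regular category. Reflexivity and symmetry of $E$ are immediate, and each joinand must lie in any equivalence relation containing $R_0$, so $E$ will be the generated one once it is shown transitive. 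Transitivity is the crux and is exactly where the Mal'cev condition enters: the factorisation through $C \times_B C \times_A C$ is precisely the difunctionality identity $R_0 R_0^{\op} R_0 = R_0$, whence $R_0 R_0^{\op}$ and $R_0^{\op} R_0$ are idempotent and every composite of two joinands collapses back into $E$ (for instance both $R_0\cdot R_0$ and $R_0^{\op}\cdot R_0^{\op}$ vanish for want of a matching block, while $R_0\cdot R_0^{\op} = R_0 R_0^{\op}$). I expect this block-by-block verification to be the main obstacle, since it is the one place the hypothesis is genuinely used and it demands careful bookkeeping.

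It remains to quotient and to verify the two exactness conditions. By Barr-exactness $E$ is effective, with coequaliser $q \colon A+B \to Q$ and $\ker q = E$; since cocones under the span correspond to maps coequalising $u,v$, equivalently coequalising $E$, the maps $q\iota_A \colon A \to Q$ and $q\iota_B \colon B \to Q$ form the pushout. This square is a pullback because $A \times_Q B$ is, by effectiveness, the mixed $A \times B$ block of $\ker q = E$, which extensivity identifies with $R_0 = C$. For stability I would pull the configuration back along an arbitrary $t \colon T \to Q$: extensivity gives $(A+B)\times_Q T \cong A_T + B_T$, and as regular epimorphisms are stable, the induced map $q_T \colon A_T + B_T \to T$ is again a quotient; its kernel pair is the pullback of $E$, and because every operation defining $E$ is stable under pullback, this kernel pair is the equivalence relation generated by the pulled-back span $A_T \leftarrow C_T \to B_T$. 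Hence $T$ is the pushout of that span, and the block computation again shows the pulled-back square to be a pullback. The pushout is therefore stable and is itself a pullback, as required.
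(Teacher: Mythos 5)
Your proposal is correct and takes essentially the same route as the paper: under the extensive decomposition of $(A+B)^2$, your relation $E = \Delta \vee R_0 \vee R_0^{\mathrm{op}} \vee R_0R_0^{\mathrm{op}} \vee R_0^{\mathrm{op}}R_0$ is block-for-block the matrix relation the paper writes down in $\cat{Rel}(\C)$, with difunctionality supplying transitivity in both cases. The remaining steps---quotienting by Barr-exactness, reading off the pullback condition from the mixed block of the kernel pair, and deducing stability from the stability of the ingredients---also coincide with the paper's argument, which you merely spell out in slightly more detail at the stability stage.
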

In proving this, we will exploit the locally preordered \emph{bicategory of relations} $\cat{Rel}(\C)$ associated to a regular category $\C$, as described in~\cite{Carboni1987Cartesian}, for example; its objects are those of $\C$, its morphisms are jointly monic spans in $\C$ and its $2$-cells are span morphisms. For a $\C$ which is coherent, $\cat{Rel}(\C)$ has finite unions in each hom-preorder, preserved by composition on each side. For a $\C$ which is moreover extensive, the finite coproducts of $\C$ extend to $\cat{Rel}(\C)$, there becoming finite biproducts; this allows us to describe relations between coproducts using a matrix calculus, as described in~\cite[Section 6]{Carboni1987Cartesian}. Two further pieces of structure will be important: firstly, the identity-on-objects involution $(\thg)^o \colon \cat{Rel}(\C)^\op \to \cat{Rel}(\C)$ that exchanges domain and codomain; secondly, the identity-on-objects, locally full embedding $\C \to \cat{Rel}(\C)$ which sends a map $f \colon A \to B$ to the jointly monic span $1 \colon A \leftarrow A \rightarrow B \colon f$. We prefer to leave this embedding nameless, using the same name to denote a map in $\C$ and the corresponding morphism of $\cat{Rel}(\C)$. Recall, finally, that for each map $f$ of $\C$, we have $f \dashv f^o$ in $\cat{Rel}(\C)$.

\begin{proof}
Given $f \colon A \leftarrow C \to B \colon g$ a Mal'cev span in $\C$, we let $R = gf^o \colon A \tor B$ in $\cat{Rel}(\C)$; thus $R$ is simply the relation embodied by the given Mal'cev span, with the Mal'cev condition now corresponding to the inequality $RR^oR \leqslant R$.
Consider the relation
\begin{equation*}
E = \left(\begin{matrix}1_A \cup R^oR & R^o \\ R & 1_B \cup RR^o\end{matrix}\right) \colon A+B \tor A+B\rlap{ .}
\end{equation*}
This is clearly reflexive and symmetric, whilst transitivity $EE \leqslant E$ follows by multiplying matrices and using $RR^oR \leqslant R$. So $E$ is an equivalence relation on $A+B$, which, since $\C$ is Barr-exact, admits a coequaliser $[h,k] \colon A + B \to D$. The universal property of this coequaliser, expressed in terms of $\cat{Rel}(\C)$, says that  $h \colon A \to D$ and $k \colon B \to D$ are initial amongst maps with
\begin{equation*}
\left(\begin{matrix}h & k\end{matrix}\right)\left(\begin{matrix}1_A \cup R^oR & R^o \\ R & 1_B \cup RR^o\end{matrix}\right) \leqslant \left(\begin{matrix}h & k\end{matrix}\right) \colon A + B \tor D\rlap{ .}
\end{equation*}
By expanding this condition out, it is easy to see that it is equivalent to the single condition that $kR \leqslant h$; in other words, that $kgf^o \leqslant h$; in other words, that $kg \leqslant hf$ in $\cat{Rel}(\C)$, or in other words, that $kg = hf$ in $\C$.
Thus $h$ and $k$ exhibit $D$ as a pushout of $f$ against $g$, and in fact as a stable pushout, since the construction used only colimits stable under pullback. It remains to show that $f$ and $g$ exhibit $C$ as a pullback of $h$ against $k$. Since $\C$ is Barr-exact, $E$ is the kernel-pair of $[h,k] \colon A+B \to D$, which in terms of $\cat{Rel}(\C)$, says that
\begin{equation*}
E = \left(\begin{matrix}h^o \\ k^o\end{matrix}\right)\left(\begin{matrix}h & k\end{matrix}\right) \colon A + B \tor A + B\rlap{ ;}
\end{equation*}
whence in particular $k^o h = R = gf^o \colon A \tor B$, so that $C$ is the pullback of $h$ against $k$, as required.
%
\end{proof}
%
We now show that well-behaved Mal'cev pushouts, together with a strict initial object, serve to completely characterise pretoposes. This gives us the promised analogue ``in the lex world'' of the construction of all finite colimits from pushouts and the initial object.

\begin{Thm}\label{thm:1}
A finitely complete category $\C$ is a pretopos if and only if it is diexact with a strict initial object.
\end{Thm}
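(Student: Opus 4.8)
The forward implication is essentially immediate: Proposition~\ref{prop:pretopmalcevexact} already gives that a pretopos is diexact, and since a pretopos is in particular extensive, its initial object is strict (a standard consequence of extensivity). So the content lies in the converse. There I would assume $\C$ finitely complete, diexact, and equipped with a strict initial object $0$, and aim to deduce that $\C$ is both extensive and Barr-exact, which is exactly the definition of a pretopos.

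For extensivity, the plan is to realise binary coproducts as distinguished Mal'cev pushouts. Since $0$ is strict, each map $0 \to A$ is monic, so the span $A \leftarrow 0 \to B$ is Mal'cev by class~(i); I would define $A+B$ to be its pushout, which by diexactness exists, is stable under pullback, and is itself a pullback. The pullback clause says precisely that $A \times_{A+B} B \cong 0$, giving disjointness; to see that the injections are moreover monic I would pull the pushout square back along one injection and read off, using strictness of $0$ and the fact that the outcome is again a pushout, that the relevant diagonal is invertible. Universality of the coproduct then falls out of stability: pulling the pushout square back along an arbitrary $g \colon X \to A+B$, and using $X \times_{A+B} 0 \cong 0$, exhibits $X$ as the coproduct of $X \times_{A+B} A$ and $X \times_{A+B} B$.

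For Barr-exactness, the key observation is that a reflexive Mal'cev span collapses its pushout to a coequaliser. Given an equivalence relation $s, t \colon E \to A$ — a reflexive Mal'cev span by~(iii), with section $r$ satisfying $sr = tr = 1_A$ — the pushout maps $h, k \colon A \to P$ satisfy $h = hsr = ktr = k$, so that $q \defeq h = k$ coequalises $s$ and $t$; a short check against the pushout universal property shows $q$ is their coequaliser. The diexactness requirement that this pushout also be a pullback says precisely that $E$ is the kernel pair of $q$, i.e.\ that the equivalence relation is effective. Kernel pairs being Mal'cev by class~(ii), the same argument furnishes coequalisers of all kernel pairs; and since each such coequaliser is a stable pushout of a reflexive span, and reflexivity is preserved under pullback, its pullbacks are again coequalisers of reflexive spans, so that regular epimorphisms are stable under pullback. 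With finite limits, coequalisers of kernel pairs, and stable regular epimorphisms in hand, $\C$ is regular; together with the effectiveness just established, this makes $\C$ Barr-exact.

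The main obstacle I anticipate is not any single hard step but the careful bookkeeping needed to extract extensivity and regularity from the two clauses ``stable'' and ``is a pullback'' — in particular, verifying that the spans produced by the various pullbacks really are the pulled-back Mal'cev spans whose pushouts stability controls, and that reflexivity (hence the coequaliser property, hence regular-epi-ness) genuinely survives these pullbacks. Once this matching is carried out, extensivity together with Barr-exactness delivers the pretopos conclusion.
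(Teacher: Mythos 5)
Your proposal is correct and follows essentially the same route as the paper: extensivity is obtained from the stable pushout-and-pullback of the Mal'cev span $A \leftarrow 0 \to B$ (using strictness of $0$ to see the legs are monic), and Barr-exactness from the observation that an equivalence relation is a reflexive Mal'cev span whose stable pushout is therefore a stable coequaliser, with the pullback condition giving effectivity. The extra bookkeeping you flag (monicity of the injections, universality of the coproduct, stability of regular epimorphisms) is exactly the detail the paper leaves implicit, and your treatment of it is sound.
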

\begin{proof}
If $\C$ is a pretopos, then it certainly has a strict initial object, and we have just seen that it is also is diexact. Suppose conversely that $\C$ is diexact with a strict initial object.
Then for any $A$ and $B$ the unique maps $0 \to A$ and $0 \to B$ are monic; whence $A \leftarrow 0 \rightarrow B$ is a Mal'cev span, and so admits a stable pushout which is also a pullback. Such a pushout is a stable coproduct of $A$ and $B$; that it is also a pullback says that this coproduct is disjoint, so showing that $\C$ is extensive. 
Now any equivalence relation $(s,t) \colon R \rightrightarrows A$ in $\C$ defines a Mal'cev span from $A$ to $A$, which consequently admits a stable pushout which is also a pullback. Since an equivalence relation is a reflexive pair, this pushout is equally a stable coequaliser of $(s,t)$; that it is also a pullback now says that $(s,t)$ is the kernel-pair of its coequaliser, so that $\C$ is Barr-exact. \end{proof}

This result characterises pretoposes in terms of diexactness and a strict initial object. We now examine what happens when we remove the requirement of a strict initial object.
In the following result, we call a finitely complete category $\C$ \emph{adhesive}, as in~\cite{Lack2005Adhesive}, if it admits pushouts along monomorphisms, which are stable and are pullbacks; this is not in fact the definition given in~\cite{Lack2005Adhesive}, but was shown to be equivalent to it in~\cite{GarnerOn-the-axioms}.
We call $\C$ \emph{amalgamable} if every span of monomorphisms  admits a pushout which is stable and a pullback. 

%
%

\begin{Thm}\label{thm:2}
For a finitely complete $\C$, the following are equivalent:\vskip0.3\baselineskip
\begin{compactenum}[(i)]
\item $\C$ is diexact;
\item $\C$ is Barr-exact and adhesive;
\item $\C$ is Barr-exact and amalgamable.
\end{compactenum}
\end{Thm}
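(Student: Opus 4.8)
The plan is to prove the cycle (i) $\Rightarrow$ (ii) $\Rightarrow$ (iii) $\Rightarrow$ (i), of which only the final implication carries real content. For (i) $\Rightarrow$ (ii): if $\C$ is diexact then every equivalence relation, being a Mal'cev span of class~(iii), admits a pushout which---as the pair is reflexive---coincides with its coequaliser, is stable, and is a pullback; the pullback condition makes equivalence relations effective, while factoring an arbitrary map through the stable coequaliser of its kernel-pair supplies the (regular epi, mono) factorisations, so $\C$ is Barr-exact. Since moreover any span one of whose legs is monic is a Mal'cev span of class~(i), its pushout is stable and a pullback, which is exactly adhesiveness. For (ii) $\Rightarrow$ (iii): a span of two monomorphisms is in particular a span one of whose legs is monic, so amalgamability is an immediate special case of adhesiveness.

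The work is in (iii) $\Rightarrow$ (i): from Barr-exactness and amalgamability I must produce, for an arbitrary Mal'cev span, a pushout that is stable and a pullback. First I would show that amalgamability makes $\C$ \emph{coherent}: given subobjects $S, T \rightarrowtail X$, the stable pushout-and-pullback of the span of monos $S \hookleftarrow S \cap T \hookrightarrow T$ admits a map to $X$ whose (stable) image is the union $S \cup T$, and stability of these amalgamable pushouts together with stability of image factorisation makes the unions stable. Having finite unions, I can compute freely in $\cat{Rel}(\C)$ as in Proposition~\ref{prop:pretopmalcevexact}. Now let $f \colon A \leftarrow C \rightarrow B \colon g$ be a Mal'cev span, write $R = gf^o$, factor $f$ and $g$ through their images $i_A \colon A' \rightarrowtail A$ and $i_B \colon B' \rightarrowtail B$, and regard $R$ as a total, cototal, difunctional relation $A' \tor B'$. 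Then $E_A = R^oR$ and $E_B = RR^o$ are equivalence relations on $A'$ and $B'$ (reflexivity from totality and cototality, transitivity from $RR^oR \leqslant R$), effective by Barr-exactness; with $q_A \colon A' \twoheadrightarrow Q$ the quotient of $E_A$, the relation $q_A R^o \colon B' \tor Q$ is readily checked to be a regular epimorphism $q_B$, the quotient of $E_B$, satisfying $q_B^o q_A = R$.

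Next I would extend $E_A$ and $E_B$ to genuine equivalence relations $\bar E_A \defeq E_A \cup \Delta_A$ on $A$ and $\bar E_B \defeq E_B \cup \Delta_B$ on $B$ (using the unions secured above, and the distributivity of relational composition over them to verify transitivity), form their quotients $\pi_A \colon A \twoheadrightarrow \bar A$ and $\pi_B \colon B \twoheadrightarrow \bar B$, and observe that $q_A, q_B$ induce monomorphisms $Q \rightarrowtail \bar A$ and $Q \rightarrowtail \bar B$, since the kernel-pairs of $\pi_A i_A$ and $\pi_B i_B$ compute back to $E_A$ and $E_B$. Finally I would take the pushout $D$ of the span of monos $\bar A \hookleftarrow Q \hookrightarrow \bar B$, which exists, is stable, and is a pullback by amalgamability. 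A relation-algebra argument of exactly the flavour used in Proposition~\ref{prop:pretopmalcevexact}---showing that any cocone $(\alpha,\beta)$ with $\alpha f = \beta g$ satisfies $\alpha \bar E_A = \alpha$ and $\beta \bar E_B = \beta$ and induces matching maps over $Q$---then identifies $D$, equipped with the composites $A \to \bar A \to D$ and $B \to \bar B \to D$, as the pushout of the original Mal'cev span. Note that, in contrast to Proposition~\ref{prop:pretopmalcevexact}, this construction nowhere uses coproducts, so only coherence (not extensivity) is required.

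The hard part will be checking stability and the pullback condition for this three-step composite square. For stability I would argue by pasting: each constituent square---the two effective quotient squares and the amalgamable pushout---is stable, and stability is preserved under the pullback-pastings by which they are reassembled, so the composite is stable. For the pullback condition I would show $C \cong A \times_D B$ by pasting pullbacks: the amalgamable pushout gives $\bar A \times_D \bar B \cong Q$, the effective quotients give $A \times_{\bar A} Q \cong A'$ and $B \times_{\bar B} Q \cong B'$, whence $A \times_D B \cong A' \times_Q B'$; and this last is $C$ precisely because the identity $q_B^o q_A = R$ exhibits the original (jointly monic) span as the pullback of $q_A$ against $q_B$. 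The genuine obstacle is thus twofold: verifying that each of the auxiliary squares really is a pullback, and confirming that stability propagates through the composite rather than merely holding stepwise; but each such point reduces to a manipulation in $\cat{Rel}(\C)$ legitimised by the coherence established at the outset.
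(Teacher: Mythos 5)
Your proposal is correct, and while it uses the same raw ingredients as the paper---stable binary unions extracted from amalgamability, coequalisers of the equivalence relations $1\cup R^oR$ and $1\cup RR^o$ supplied by Barr-exactness, and a final amalgamation of monomorphisms, all glued by calculations in $\cat{Rel}(\C)$---it assembles them by a genuinely different decomposition. The paper proceeds asymmetrically and in stages: it first proves a lemma for Mal'cev spans with one leg regular epimorphic (quotienting $A$ alone by $1_A\cup R^oR$ and inducing the map out of $B$ from a coequaliser comparison), then treats a general span by taking the (regular epi, mono) factorisation of $g$, pushing out, factorising the induced $f'$, pushing out again, checking the residual leg is monic, and amalgamating---a horizontal strip of three pushout squares. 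You instead work symmetrically: restrict $R$ to the images $A'$, $B'$, quotient both at once onto a common $Q$ via $q_A$ and $q_B=q_AR^o$, extend to equivalence relations on $A$ and $B$, and amalgamate $\bar A\leftarrow Q\rightarrow\bar B$, yielding a $2\times2$ grid. Your route buys a more transparent verification of the pullback condition ($A\times_D B\cong A'\times_Q B'\cong C$ by pasting) and exposes the symmetric roles of $R^oR$ and $RR^o$; the paper's route avoids having to check that $q_AR^o$ is a (regular epic) map and that the comparisons $Q\rightarrowtail\bar A$, $Q\rightarrowtail\bar B$ are monic, at the cost of the two-stage factorisation and the extra argument that $g_2'$ has trivial kernel-pair. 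The one point you should tighten is the stability of the composite square: ``stability is preserved under pasting'' needs either a check that each of the four constituent squares is itself a stable pushout (in particular that the pullback square of the regular epis $q_A$, $q_B$ is one), or, as the paper does, the observation that every ingredient of your construction---images, the relations $R^oR$, the unions, the exact quotients, the amalgamable pushout, and the Mal'cev condition itself---is preserved by pullback along any $D''\to D$; either way this closes without difficulty and is at the same level of detail the paper leaves implicit.
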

The implication (ii) $\Rightarrow$ (i) of this result is essentially stated as~\cite[Theorem 5.2]{Meisen1974Relations}, though under the unnecessary additional assumption that $\C$ have all pushouts. Observe also that since pretoposes are Barr-exact by definition, and are known to be amalgamable---see~\cite[Lemma A1.4.8]{Johnstone2002Sketches} for example---this result gives another proof of our Proposition~\ref{prop:pretopmalcevexact}. 
\begin{proof}
The implications (i) $\Rightarrow$ (ii) $\Rightarrow$ (iii) are trivial, since a span with both legs monomorphic certainly has one leg monomorphic, and a span with one leg monomorphic is Mal'cev; it remains to prove (iii) $\Rightarrow$ (i). Suppose that $\C$ is finitely complete, Barr-exact and amalgamable. It follows that $\C$ admits stable binary unions: for given monomorphisms $A \rightarrowtail C \leftarrowtail B$, we may form their intersection $A\cap B$ and the pushout $P$ of the projections $A \leftarrowtail A\cap B \rightarrowtail B$; now the stability of this pushout ensures that the induced map $P \to C$ is monomorphic---see~\cite[Theorem 5.1]{Lack2005Adhesive}, for example---so that $P$ is the stable union of the subobjects $A$ and $B$. It follows that $\cat{Rel}(\C)$ admits binary unions in each of its hom-preorders, preserved by composition on both sides.

We first prove that any Mal'cev span $f \colon A \leftarrow C \twoheadrightarrow B \colon g$ in $\C$ with one leg regular epimorphic admits a stable pushout which is a pullback. Given such a span, we consider the solid part of the diagram
\begin{equation*}
\cd{
M \ar@<3pt>[r]^{c_0} \ar@<-3pt>[r]_{c_1} \ar[d]_1 & C \ar@{->>}[r]^g \ar[d]^f & B \ar@{-->}[d]^k \\
M \ar@<3pt>[r]^{fc_0} \ar@<-3pt>[r]_{fc_1} & A \ar@{-->>}[r]_h & D\rlap{ ,}
}
\end{equation*}
with $(c_0, c_1)$ the kernel-pair of $g$. Taking $R = gf^o \colon A \tor B$ in $\cat{Rel}(\C)$ as before, the Mal'cev condition $RR^o R \leqslant R$ implies that $1_A \cup R^oR$ is an equivalence relation on $A$. Let $h \colon A \twoheadrightarrow D$ be its coequaliser; so $h$ is initial amongst maps such that $h(1_A \cup R^oR) \leqslant h$, or equally, such that $hR^oR \leqslant h$. But $hR^oR = hfg^ogf^o = hfc_0c_1^of^o$, and so $h$ is initial such that $hfc_0 \leqslant hfc_1$ in $\cat{Rel}(\C)$, or such that $hfc_0 = hfc_1$ in $\C$. In other words, $h$ is a coequaliser of $(fc_0, fc_1)$, and so both rows of the above diagram are coequalisers. We therefore induce a unique map $k \colon B \to D$ as indicated; and since the left-hand vertical map is an identity, it follows that the right square is a pushout. This pushout is clearly stable under pullback, having been constructed from stable colimits; to show that it itself is a pullback is equally to show that $R = k^oh$ in $\cat{Rel}(\C)$. Because $g$ is strong epimorphic, we have $1_B = gg^o$ and so $k^oh = gg^ok^oh = gf^oh^oh = Rh^oh$. But because $\C$ is Barr-exact, $h^o h = 1_A \cup R^oR$ and so $k^oh = R(1_A\cup R^oR) = R \cup RR^oR = R$ as required.

We now prove that $\C$ is diexact. Given a Mal'cev span $f \colon A \leftarrow C \rightarrow B$ in $\C$, form a (regular epi, mono) factorisation $g = g_2 \circ g_1$. It is easy to see that the pair $(f, g_1)$ is again a Mal'cev span, which by the case just proved admits a stable pushout and pullback, as on the left in:
\begin{equation*}
\cd{
 C \ar[d]_f \ar@{->>}[r]^{g_1} & C' \ar@{ >->}[r]^{g_2} \ar[d]^{f'} & B \\
 A \ar@{->>}[r] & A'
} \qquad
\cd{
 C' \ar@{ >->}[d]_{g_2} \ar@{->>}[r]^{f'_1} & C'' \ar@{ >->}[r]^{f'_2} \ar[d]^{g'_2} & A' \rlap{ .} \\
 B \ar@{->>}[r] & B'
} \qquad
\end{equation*}
Now forming a (regular epi, mono) factorisation $f' = f'_2 \circ f'_1$, we obtain a Mal'cev span $(f'_1, g_2)$ which since $f'_1$ is regular epimorphic, admits a stable pushout and pullback, as on the right above. On pulling back this pushout square along $g'_2$, its left edge becomes invertible; since the resultant square is still a pushout, its right edge must also be invertible, which is to say that $g'_2$ has trivial kernel-pair, and so is monomorphic. So $(f'_2, g'_2)$ is a pair of monomorphisms, which, as $\C$ is amalgamable, admit a stable pushout and pullback. Pasting together the three pushout squares just obtained, we conclude that $(f,g)$ admits a stable pushout and pullback as required.
%
%
%
%
\end{proof}

For our third and final result, we prove an embedding theorem for diexact categories, showing that they capture precisely the compatibilities holding between finite limits and pushouts of Mal'cev spans that hold in any Grothendieck topos. It follows from this that diexactness is an exactness notion in the sense of~\cite{Garner2011Lex-colimits}.
\begin{Thm}
For $\C$ a small, finitely complete category admitting pushouts of Mal'cev spans, the following are equivalent:\vskip0.5\baselineskip
\begin{compactenum}[(i)]
\item $\C$ is diexact;
\item $\C$ admits a full embedding into a Grothendieck topos via a functor preserving finite limits and pushouts of Mal'cev spans.
\end{compactenum}
\end{Thm}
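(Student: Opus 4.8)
The plan is to prove the two implications separately, with the substantial content lying in the passage from (i) to (ii). Throughout I write $y \colon \C \to [\C^\op, \cat{Set}]$ for the Yoneda embedding.

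For the direction (ii) $\Rightarrow$ (i), suppose $F \colon \C \to \E$ is a full embedding into a Grothendieck topos preserving finite limits and pushouts of Mal'cev spans. A Grothendieck topos is an infinitary pretopos, hence a pretopos, hence diexact by Proposition~\ref{prop:pretopmalcevexact}. Since $F$ is full, faithful and finite-limit preserving, it reflects finite limits and monomorphisms, and---because the defining factorisation through $C \times_B C \times_A C$ is a finite-limit condition---it reflects Mal'cev spans. Given a Mal'cev span in $\C$ with pushout $D$, the square maps under $F$ to the pushout of a Mal'cev span in $\E$, which is there a pullback; reflecting along $F$ shows the original square is a pullback. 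For stability, pull the pushout square back in $\C$ along some $D' \to D$; one checks that the top span of the resulting square is again Mal'cev, so admits a pushout in $\C$ which $F$ preserves, and comparing this with the image of the pulled-back square---a pushout in $\E$ by stability there---and using faithfulness shows the pulled-back square is a pushout. Thus $\C$ is diexact.

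For the direction (i) $\Rightarrow$ (ii), I would equip the small category $\C$ with the Grothendieck topology $J$ generated by the cocones $\{A \to D \leftarrow B\}$ arising as pushouts of Mal'cev spans, and take the embedding to be $a \circ y \colon \C \to \cat{Sh}(\C, J)$, with $a$ the associated-sheaf functor; as $\C$ is small this target is a Grothendieck topos. Observe first that each equivalence relation is a Mal'cev span whose pushout cocone is the single regular epimorphism onto its quotient, so that---since $\C$ is Barr-exact by Theorem~\ref{thm:2}---the regular-epimorphism covers are among the generators of $J$. Preservation of finite limits is then immediate, since $y$ preserves all limits and $a$ is left exact; and once $J$ is shown to be subcanonical, $a \circ y$ is full and faithful, agreeing then with the restricted Yoneda embedding into sheaves.

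The two genuine tasks are to verify subcanonicity and preservation of Mal'cev pushouts, and both rest on the full force of diexactness. For subcanonicity I would check the sheaf condition for a representable $\C(\thg, Y)$ against the generating covers: against a regular-epimorphism cover it reduces to the quotient being the coequaliser of its kernel-pair, which holds by Barr-exactness; against a Mal'cev pushout cover $\{A \to D \leftarrow B\}$ a matching family is a compatible pair $A \to Y$, $B \to Y$, and the fact that the square is simultaneously a pushout and a pullback, so that $A \times_D B = C$, yields the required unique factorisation $D \to Y$. For preservation, writing $P$ for the pushout of $y A \leftarrow y C \to y B$ in presheaves, I would show the canonical comparison $P \to y D$ is a $J$-local isomorphism, whence $a P \cong a\, y D = a\circ y(D)$; here local surjectivity follows from stability of the pushout, which makes the cover $\{A \to D \leftarrow B\}$ persist under pullback, while local injectivity follows from the identity $C = A \times_D B$ together with Barr-exactness. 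The main obstacle is precisely this pair of verifications: because the legs of a Mal'cev pushout need not be monic, one must also control the overlaps $A \times_D A$ and $B \times_D B$, and it is here that the stability-and-pullback conditions, handled through the matrix calculus in $\cat{Rel}(\C)$ exploited above, do the decisive work.
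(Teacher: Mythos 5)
Your (ii) $\Rightarrow$ (i) direction is fine and essentially the paper's argument (the paper phrases it as ``the essential image is closed under the relevant limits and colimits, hence diexact''; your reflection argument amounts to the same thing). Your (i) $\Rightarrow$ (ii) direction starts identically to the paper --- same topology generated by the Mal'cev-pushout cocones, same observation that stable effective-epimorphicity gives subcanonicity and hence a limit-preserving full embedding via restricted Yoneda --- but it diverges, and breaks down, at the one point where the real work lies: showing that $J$ preserves pushouts of Mal'cev spans.

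You propose to show directly that the comparison $P \to yD$ from the presheaf pushout is a local isomorphism, and you correctly identify that local injectivity requires controlling the overlaps $A \times_D A$ and $B \times_D B$ (one needs, in effect, $h^o h = 1_A \cup R^oR$ and $k^ok = 1_B \cup RR^o$). But you then defer this to ``the matrix calculus in $\cat{Rel}(\C)$ exploited above''. That calculus is not available here: it lives on the biproduct $A+B$ in $\cat{Rel}(\C)$, which presupposes that $\C$ has finite coproducts and is extensive --- exactly the hypothesis that distinguishes a pretopos (Proposition~\ref{prop:pretopmalcevexact}, where the matrix calculus is legitimately used) from a general diexact category (Theorem~\ref{thm:2}, whose proof is deliberately structured to avoid coproducts). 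So the ``decisive work'' you gesture at is precisely the missing step, and the tool you name cannot do it. The paper closes this gap differently: since Theorem~\ref{thm:2} \emph{constructs} the pushout of an arbitrary Mal'cev span from finite limits, coequalisers of equivalence relations, and pushouts of spans of monomorphisms, it suffices to check that $J$ preserves those two kinds of colimit; and each follows from a standard topos fact (an epimorphism is the coequaliser of its kernel-pair; a jointly epimorphic pair of monomorphisms is the pushout of its pullback). If you want to keep your direct local-isomorphism argument, you would instead have to compute $A\times_D A$ and $B \times_D B$ by tracing through the three-square pasting in the proof of Theorem~\ref{thm:2}; as written, the claim is unproved.
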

\begin{proof}
Any Grothendieck topos is a pretopos, hence diexact; and clearly any full subcategory of a diexact category closed under the relevant limits and colimits will again be diexact. So if $J \colon \C \to \E$ is an embedding of $\C$ into a Grothendieck topos, then the essential image of $J$ is diexact; but $\C$ is equivalent to this essential image, and hence is itself diexact. This shows that (ii) $\Rightarrow$ (i).

Conversely, suppose that $\C$ is diexact. We consider the smallest topology on $\C$ for which every pair of maps $A \to D \leftarrow B$ arising as the pushout of some Mal'cev span is covering. Since such covers are stably effective-epimorphic, they generate a subcanonical topology on $\C$ and now the restricted Yoneda embedding provides a limit-preserving full embedding $J \colon \C \to \cat{Sh}(\C)$. We must show that $J$ also preserves pushouts of Mal'cev spans; for this, it suffices to show that it preserves coequalisers of equivalence relations and pushouts of pairs of monomorphisms, since Theorem~\ref{thm:2} used only these colimits (together with finite limits) to construct pushouts of Mal'cev spans in a diexact category.

If $(s,t) \colon E \rightrightarrows A$ is an equivalence relation in $\C$, then its coequaliser $q \colon A \twoheadrightarrow B$ is a singleton cover, and so $Jq$ is an epimorphism in a topos, hence the coequaliser of its kernel-pair. But that kernel-pair is the image under $J$ of $q$'s kernel-pair $(s,t)$, whence $J$ preserves coequalisers of equivalence relations. On the other hand, if $f \colon A \leftarrowtail B \rightarrowtail C \colon g$ is a pair of monomorphisms in $\C$, then they admit a pushout $h \colon A \rightarrowtail D \leftarrowtail C \colon k$ where, arguing as in the proof of Theorem~\ref{thm:2}, both $h$ and $k$ are monomorphic. Since $(h,k)$ comprise a covering family, $(Jh, Jk)$ are a jointly epimorphic pair of monomorphisms in a topos, and are thus the pushout of their own pullback; but that pullback is the image under $J$ of the pullback of $h$ and $k$, which is $(f,g)$. Thus $J$ preserves pushouts of pairs of monomorphisms as well as coequalisers of equivalence relations; and so as argued above, it also preserves pushouts of Mal'cev spans.\end{proof}


\bibliographystyle{acm}

\bibliography{bibdata}

\end{document}